\begin{document}
%\input{notations.tex}
%%%%%%%%%%%%%%%%%%%%%%%%%%%%%%%%%%%%%%%%%%%%%%%%%%%%%%5
%-----------------------------------------------------------------------
\def\Sp{{\rm Sp}}
\def\Ra{{\rm Ra}}
\def\GM{{\rm GM}}

\def\per{{\rm X}}      % period map/matrix
\def\perr{{\sf q}}        %period matrix.....
\def\perdo{{\cal K}}   %period domain
\def\sfl{{\mathrm F}} %Space of filtrations
\def\sp{{\mathbb S}}  %Sphere

\newcommand\diff[1]{\frac{d #1}{dz}} %Differential operator
\def\End{{\rm End}}              %Endomorphism group

\def\sing{{\rm Sing}}            %The set of singularities
\def\spec{{\rm Spec}}            %The spectrume
\def\cha{{\rm char}}             %Charracteristic
\def\Gal{{\rm Gal}}              %The Galois group
\def\jacob{{\rm jacob}}          %the Jacobian ideal
\def\tjurina{{\rm tjurina}}      %the tjurina ideal
\newcommand\Pn[1]{\mathbb{P}^{#1}}   %Projective space of dimension #1
\def\Ff{\mathbb{F}}                  %Finite field
\def\Z{\mathbb{Z}}                   %Integer  numbers
\def\Gm{\mathbb{G}_m}                 %The multiplicative group
\def\Q{\mathbb{Q}}                   %Rational  numbers
\def\C{\mathbb{C}}                   %Complex numbers
\def\O{{\cal O}}                     %ring of integers of a number field
\def\as{\mathbb{U}}                  %Some affine space
\def\ring{{\mathsf R}}                         %A ring
\def\R{\mathbb{R}}                   %real numbers
\def\N{\mathbb{N}}                   %natural numbers
\def\A{\mathbb{A}}                   %affine space C^n
\def\uhp{{\mathbb H}}                %upper half plane
\newcommand\ep[1]{e^{\frac{2\pi i}{#1}}}% unipotent numbers
\newcommand\HH[2]{H^{#2}(#1)}        %Hodge structures
\def\Mat{{\rm Mat}}              %Matrices
\newcommand{\mat}[4]{
     \begin{pmatrix}
            #1 & #2 \\
            #3 & #4
       \end{pmatrix}
    }                                %two by two matrices
\newcommand{\matt}[2]{
     \begin{pmatrix}                 % one by two matrix
            #1   \\
            #2
       \end{pmatrix}
    }
\def\ker{{\rm ker}}              %kernel
\def\cl{{\rm cl}}                %Chern class
\def\dR{{\rm dR}}                %The subindex dR standing for de Rham
                                     %cohomology.

\def\hc{{\mathsf H}}                 %The set of Hodge cycles.
\def\Hb{{\cal H}}                    %Hodge bundle
\def\GL{{\rm GL}}                %The liner group
\def\pese{{\sf P}}                  %Period set
\def\pedo{{\cal  P}}                  %Period domain
\def\PP{\tilde{\cal P}}              %the period domain/ discrete group
\def\cm {{\cal C}}                   %the set of CM Hodge structures
\def\K{{\mathbb K}}                  %Field representing R or C
\def\k{{\mathsf k}}                  %Arbitrary field
\def\F{{\cal F}}                     %Hodge filtration bundle
\def\M{{\cal M}}
\def\RR{{\cal R}}
\newcommand\Hi[1]{\mathbb{P}^{#1}_\infty}%the hyperplane at infinity
\def\pt{\mathbb{C}[t]}               %Polynomials in t
\def\W{{\cal W}}                     %weight filtration
\def\gr{{\rm Gr}}                %graded pieces
\def\Im{{\rm Im}}                %imaginary
\def\Re{{\rm Re}}                %Real
\def\depth{{\rm depth}}
\newcommand\SL[2]{{\rm SL}(#1, #2)}    %SL(2,Z)
\newcommand\PSL[2]{{\rm PSL}(#1, #2)}  %PSL(2,Z)
\def\Resi{{\rm Resi}}              %Residue

\def\L{{\cal L}}                     %The moduli of polarized lattices in a
                                     %fixed vector spaces.
\def\Aut{{\rm Aut}}              %Automorphism group of a vectorspace
\def\any{R}                          %Any subring of the field of complex
                                     %numbers.
\newcommand\ovl[1]{\overline{#1}}    %Conjugation of #1.

\def\T{{\cal T }}                    %Tangent space
\def\tr{{\mathsf t}}                 %Transposition of matrices
\newcommand\mf[2]{{M}^{#1}_{#2}}     %New modular functions
\newcommand\mfn[2]{{\tilde M}^{#1}_{#2}}     %New modular functions
\newcommand\bn[2]{\binom{#1}{#2}}    %Binomial
\def\ja{{\rm j}}                 %j of a two by two matrix
\def\Sc{\mathsf{S}}                  %Simple cycles
\newcommand\es[1]{g_{#1}}            %Eisenstein series
\newcommand\V{{\mathsf V}}           %Milnor vector space
\newcommand\WW{{\mathsf W}}          %Similar to Milnor vector space
\newcommand\Ss{{\cal O}}             %Structural sheaf
\def\rank{{\rm rank}}                %rank of a module
\def\Dif{{\cal D}}                   %Differentials
\def\gcd{{\rm gcd}}                  %greatest common divisor
\def\zedi{{\rm ZD}}                  %zero divisors of a module
\def\BM{{\mathsf H}}                 %Brieskorn module
\def\plf{{\sf pl}}                             %Picard-Lefschetz formula
\def\sgn{{\rm sgn}}                      %sign
\def\diag{{\rm diag}}                   %diagonal matrix
\def\hodge{{\rm Hodge}}
\def\HF{{\sf F}}                                %The hodge filtration of the brieskon module
\def\WF{{\sf W}}                               %The weight filtration of the brieskon module
\def\HV{{\sf HV}}                                %humbert variety
\def\pol{{\rm pole}}                               %pole divisor
\def\bafi{{\sf r}}
\def\codim{{\rm codim}}                               %codimension
\def\id{{\rm id}}                               %identity
\def\gms{{\sf M}}                           %Gauss-Manin system
\def\Iso{{\rm Iso}}                           %Gauss-Manin system

\def\hl{{\rm L}}    %holomorphic limit
\def\bcov{{\rm BCOV}}
\def\imF{{\rm F}}
\def\imG{{\rm G}}

\def\ord{{\rm ord}}
\newcommand\FF{F(a_1,\cdots,a_n|z)} 
\newcommand*\pFqskip{8mu}
\catcode`,\active
\newcommand*\pFq{\begingroup
        \catcode`\,\active
        \def ,{\mskip\pFqskip\relax}%
        \dopFq
}
\catcode`\,12
\def\dopFq#1#2#3#4#5{%
        {}_{#1}F_{#2}\biggl(\genfrac..{0pt}{}{#3}{#4};#5\biggr)%
        \endgroup
}

\def\CC{\tilde \Z}

%------------------------------------------------------------------------
\def\losu {{G}}
\newcommand\dwork[1]{\delta_{#1}}
%%%%%%%%%%%%%%%%%%%%%%%%%%%%%%%%%%%%%%%%%%%%%%%%%%%%%%%%5
%---------------------------------------------
\newtheorem{theo}{Theorem}
\newtheorem{exam}{Example}
\newtheorem{coro}{Corollary}
\newtheorem{defi}{Definition}
\newtheorem{prob}{Problem}
\newtheorem{lemm}{Lemma}
\newtheorem{prop}{Proposition}
\newtheorem{rem}{Remark}
\newtheorem{conj}{Conjecture}
\newtheorem{calc}{}

\begin{center}
{\LARGE\bf Automorphic forms for triangle groups: Integrality properties
%\footnote{The text is under construction. Any comment is welcome.} 
%and differential equations for modular forms
}
\\
\vspace{.25in} {\large {\sc Hossein Movasati and Khosro M. Shokri }}\footnote{
Instituto de Matem\'atica Pura e Aplicada, IMPA,
Estrada Dona Castorina, 110,
22460-320, Rio de Janeiro, RJ, Brazil,}

%E-mail:
%{\tt hossein@impa.br} \\
%{\tt www.impa.br/$\sim$hossein}
\end{center}
\begin{abstract}
We classify all primes  appearing in the denominators of the Hauptmodul $J$ and modular forms  for  non-arithmetic triangle groups with a cusp.
These primes have a congruence condition in terms of the order of the generators of the group. As a corollary
we show that for the Hecke group of type $(2,m,\infty)$, the prime $p$ does not appear in the denominator of
$J$ if and only if $p\equiv \pm 1\pmod m$. 
\end{abstract}
%{\tiny
\textit{Keywords:} Triangle groups, Automorphic forms, Gauss hypergeometric functions, Dwork method.
%\tableofcontents
%}

\section{Introduction}
The theory of automorphic forms for Fuchsian groups was first developed by Poincar\'{e}. 
His construction based on series carrying nowadays his name, analogous to the classical Eisenstein series (Fuchsian theta-series in his terminology).
A disadvantage of this method is that explicit $q$-expansions which are fruitful part of the theory
of modular forms for arithmetic groups are not available for these groups.
An alternative approach with concentrating on explicit $q$-expansions for a special case, namely hyperbolic triangle groups is available.
Here we briefly explain this method (for details see \cite{hokh2}).\\
Let us consider the Halphen system
\begin{equation}
\label{tatlis}
\left \{ \begin{array}{l}
\dot t_1=
( a-1)(t_1t_2+t_1t_3-t_2t_3)+( b+ c-1)t_1^2\\
\dot t_2 =
(b-1)(t_2t_1+t_2t_3-t_1t_3)+( a+ c-1)t_2^2\\
\dot t_3 =
( c-1)(t_3t_1+t_3t_2-t_1t_2)+(a+ b-1)t_3^2
\end{array} \right. 
\end{equation}
with 
\begin{equation}
 \label{abc}
1-a-b=\frac{1}{m_1}, \quad 1-b-c=\frac{1}{m_2}, \quad 1-a-c=\frac{1}{m_3}=0, 
\end{equation}
and $m_1\leq m_2\in\N\cup\{\infty\}$ with the hyperbolicity condition $\frac{1}{m_1}+\frac{1}{m_{2}}<1$. 
Here $\dot t=q\frac{dt }{d q}$ and we consider $t_i\in\C[[q]]$ as formal power series in $q$
with the initial condition:
$$t_1(0)=t_3(0)=0,$$
\begin{equation*}
t_2=\left \{ \begin{array}{l}
-1-(m_1+1)q+O(q^2) \quad \qquad \qquad \qquad \qquad \text{if} \quad m_2=\infty\\
              -1+(m_1^2m_2+m_1^2-m_1m_2^2-m_2^2)q+O(q^2) \quad \text{otherwise.}
              \end{array}\right.
\end{equation*}
The recursion of Halphen system determines uniquely
$t_i$'s.
If we set $q=\exp(\frac{2\pi i\tau}{h})$, where $h= 2\cos(\frac{\pi}{m_1})+2\cos(\frac{\pi}{m_2})$, then $t_i$'s are meromorphic functions
on  $\Im (\tau)> \tau_0$ for some real positive $\tau_0$. Now,  rescaling $q$ by  multiplying a constant,  
$t_i$'s become meromorphic on the whole upper half plane  with modular property with respect to the 
triangle group  $\Gamma_{\mathfrak{t}}:=\langle \gamma_1,\gamma_2,\gamma_3\rangle\subset \SL 2\R$ of type $\mathfrak{t}=(m_1,m_2,\infty)$, where
%We remind that as an abstract group, a triangle group has presentation $\langle g_1,g_2,g_3\mid g_i^{m_i}=1=g_1g_2g_3\rangle$.
%Given one such triangle group, one can find another by conjugating by any $g\in\PSL 2\R$.
%The triangle group of a given type $(m_1,m_2,m_3)$ is unique up to this conjugation, and so is determined up to 3 real
%parameters. As the automorphic functions of $\Gamma$ and $g\Gamma g^{-1}$ are
%related by $f(\tau)\leftrightarrow f(g^{-1}\tau)$, we are free to choose any realization.  Of course,
%this conjugation will in general affect the integrality of Fourier coefficients, so some choices
%are better than others. We choose the following generators
\begin{equation}\label{trigen}\gamma_1=\left(\begin{matrix}2\cos(\frac{\pi}{ m_1})&1\cr -1&0\end{matrix}\right)\,,
\gamma_2=\left(\begin{matrix}0&1\cr -1&2\cos(\frac{\pi} {m_2})\end{matrix}\right)\,,
\gamma_3=\left(\begin{matrix}1&h\cr 0&1\end{matrix}\right)
\end{equation}
%for the group $\Gamma_{\mathfrak{t}}$, with the property
$$
\gamma_1\gamma_2 \gamma_3=\gamma_1^{m_1}=\gamma_2^{m_2}=-I_{2\times 2}.
$$
The Hauptmodul for this triangle  group  is given by
\begin{equation}
\label{11sep}
J= \frac{t_3-t_2}{t_3-t_1}.
\end{equation}
We define
\begin{align}
E_{2k}^{(1)}&:=(t_1-t_2)(t_3-t_2)^{k-1}\in 1+q\Q[[q]],\\
\label{ek2}
E_{2k}^{(2)}&:=(t_1-t_2)^{k-1}(t_3-t_2)\in 1+q\Q[[q]].
\end{align}
In \cite{hokh2} we showed that  the algebra of automorphic forms for the group $\Gamma_{\mathfrak{t}}$
 with $m_1\leq m_2<\infty$ is generated by
$$
E_{2k}^{(1)}, \quad 3\leq k\leq m_1, \qquad E_{2k}^{(2)}, \quad 2\leq k\leq m_2,
$$
and when $m_1<m_2=\infty$, the algebra is generated by
$$
E_{2k}^{(1)}, \quad 1\leq k\leq m_1.
$$
For the triangle group of type  $(\infty,\infty,\infty)$ see \cite{hokh2}. 
The coefficients of $J$ are rational numbers apart from the rescaling (transcendental) constant.
 This constant appears to fit the convergence of $J$ in the whole upper half plane 
(see \cite{wo83} for a proof of transcendence of this constant).
The rationality comes out from the recursion of the Halphen system for the coefficients of $t_i$. 
A natural question would be a classification of primes which appear in the denominators. 
In \cite{hokh2}  we stated a conjecture concerning this problem.
The aim of this article is to give a complete answer to this question. 
We recall that a power series $f$ is called $p$-integral if, after multiplication of $f$ by a constant , its 
coefficients are  $p$-adic integers.
We say an algebra of power series in $\Q[[q]]$ is $p$-integral if
it has a basis with $p$-integral elements. We say an object (function or algebra) is 'almost' integral if it is $p$-integral for all but finitely many $p$.
\begin{theo}
\label{29oct}
\label{maintheo}
Let $m_1\leq m_2\in \N$ and $p$ be a prime with $p>2m_1m_2$.  
The Hauptmodul  $J$, defined in \eqref{11sep}, for the triangle group of
type $(m_1,m_2,\infty)$ is $p$-integral if and only if for some $\epsilon=\pm 1$ and $\epsilon '=\pm 1$ we have 
$$
\left( p\stackrel{2m_1}{\equiv}\epsilon,\, p\stackrel{2m_2}{\equiv}\epsilon'\epsilon \right )  \hbox{ or }  
\left ( p\stackrel{2m_1}{\equiv} m_1+\epsilon,\, p\stackrel{2m_2}{\equiv}m_2+\epsilon '\epsilon  
\right ). 
$$
For the triangle group $(m,\infty,\infty)$ and $p>2m$ the Hauptmodul  $J$ is $p$-integral if and only if
$$
p \stackrel{2m}{\equiv}\pm 1.
$$
%$$
%\begin{array}{ll}
%p \stackrel{2m}{\equiv}\pm 1 &  \hbox{ if $m$ even }\\
%p \stackrel{m}{\equiv}\pm 1 &  \hbox{ if $m$ odd }\\
%\end{array}
%$$
\end{theo}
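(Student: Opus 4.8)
The plan is to pass through the \emph{Dwork method} for $p$-adic hypergeometric functions, using the classical fact that the Hauptmodul of a triangle group is the inverse of a Schwarz map, hence of the ratio of two solutions of a Gauss hypergeometric equation. First I would make this dictionary explicit, building on the relation between the Halphen system \eqref{tatlis} and hypergeometric functions (cf.\ \cite{hokh2}): the group of type $(m_1,m_2,\infty)$ corresponds to the hypergeometric equation with local exponent differences $\tfrac1{m_1},\tfrac1{m_2},0$. Placing the cusp (the exponent difference $0$) at $z=0$, the two Frobenius solutions there are $y_1={}_2F_1(a,b;1;z)=\sum_n\frac{(a)_n(b)_n}{(n!)^2}z^n$ and a logarithmic solution $y_2=y_1\log z+\tilde y$ with $\tilde y\in z\,\Q[[z]]$, where
$$a=\tfrac12-\tfrac{1}{2m_1}+\tfrac{1}{2m_2},\qquad b=\tfrac12-\tfrac{1}{2m_1}-\tfrac{1}{2m_2},\qquad c=1 .$$
The decisive point is that the denominators of $a$ and $b$ divide $2m_1m_2$, so the hypothesis $p>2m_1m_2$ makes the parameters $p$-integral and places us in the good-reduction range where Dwork's congruences apply.

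Next I would identify $J$ with the functional inverse of the canonical coordinate $q=\exp(y_2/y_1)=z\exp(h(z))$, $h\in z\,\Q[[z]]$, which, up to the transcendental rescaling mentioned in the introduction, is the modular $q=\exp(\tfrac{2\pi i\tau}{h})$. Since $J(0)=\infty$ with a simple pole, one has $z\sim\mathrm{const}/J$ near the cusp, so $p$-integrality of $J$ as a $q$-series is equivalent to the simultaneous $p$-integrality of the mirror map $q(z)$ and its inverse $z(q)$. By the Dwork method this is governed entirely by the $p$-adic behaviour of $A_n=\frac{(a)_n(b)_n}{(n!)^2}$, the criterion being that the Frobenius (Dwork) map $\alpha\mapsto\langle p\alpha\rangle$, with $\langle\cdot\rangle$ the fractional part, sends the parameter pair $\{a,b\}$ to $\{a,b\}$ or to $\{1-a,1-b\}$ — the two descriptions of the \emph{same} triangle group, obtained by flipping the signs of the exponent differences $\tfrac1{m_1},\tfrac1{m_2}$ — while the denominator parameters $\{0,0\}$ coming from $(n!)^2$ are fixed automatically.

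The heart of the computation is to convert this invariance into the explicit two-family congruence. Writing $r_i=p\bmod 2m_i$, a direct calculation gives $\langle pa\rangle\equiv\tfrac12-\tfrac{r_1}{2m_1}+\tfrac{r_2}{2m_2}$ and $\langle pb\rangle\equiv\tfrac12-\tfrac{r_1}{2m_1}-\tfrac{r_2}{2m_2}$ modulo $1$. The pair $(\langle pa\rangle,\langle pb\rangle)$ is then a permutation of $(a,b)$ or of $(1-a,1-b)$ exactly when $r_1\equiv\pm1\pmod{2m_1}$ and $r_2\equiv\pm1\pmod{2m_2}$, or when $r_1\equiv m_1\pm1\pmod{2m_1}$ and $r_2\equiv m_2\pm1\pmod{2m_2}$; the two free signs are what the statement records through $\epsilon,\epsilon'$. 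In the second family the half-shifts $r_i\equiv m_i\pm1$ each contribute a $\tfrac12$ to $\langle pa\rangle$, and these two halves cancel only when both are present, which is precisely why a residue mixing one modulus of the first family with one of the second leaves an uncancelled $\tfrac12$, throws $\langle pa\rangle$ off $\{a,b,1-a,1-b\}$, and destroys integrality. Sufficiency then follows from Dwork. I expect the main obstacle to be twofold: pinning down the exact Dwork-type congruence that characterizes integrality of the \emph{mirror map} and its inverse rather than of the bare coefficients $A_n$, and the necessity half, for which I would, for each residue outside the two families, locate the first index $n$ at which the interlacing of $(A_n)$ fails and read off a factor of $p$ in the denominator of the corresponding coefficient of $z(q)$, equivalently of $J$. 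Finally the group $(m,\infty,\infty)$ is the degeneration with two cusps, where $a=b=\tfrac12-\tfrac1{2m}$ and $c=1$; the same scheme runs with a single parameter, and invariance of $\{a\}$ collapses to $\langle pa\rangle\in\{a,1-a\}$, i.e.\ $p\equiv\pm1\pmod{2m}$.
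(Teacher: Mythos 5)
Your setup (the parameters \eqref{tarifab}, $J$ as the inverse of the mirror map, reduction of everything to the Dwork-map invariance \eqref{12oct2013}) and your residue computation with $r_i=p\bmod 2m_i$ do match the paper: the paper performs the same case analysis, only written via \eqref{121212} as $p^{-1}a_1\equiv\pm a_1\pmod{a_2}$, $p^{-1}b_1\equiv\pm b_1\pmod{b_2}$, and your map $\alpha\mapsto\langle p\alpha\rangle$ is the inverse bijection of $\dwork{p}$ on rationals with denominator prime to $p$, so the two formulations of the invariance agree. One omission in your sufficiency half: Dwork's congruence (Theorem \ref{9/11}) together with Lemma \ref{dieudonne} settles only the case $\{\dwork{p}(a),\dwork{p}(b)\}=\{a,b\}$; for the case $\{1-a,1-b\}$ you also need the Euler identity $F(a,b|z)=(1-z)^{1-a-b}F(1-a,1-b|z)$, which shows that $D$, hence $q$, is unchanged under $(a,b)\mapsto(1-a,1-b)$ — this is exactly how Corollary \ref{25nov} is proved.

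The genuine gap is the necessity half ("only if"), which is the paper's main contribution and which you defer to "locating the first index $n$ at which the interlacing of $(A_n)$ fails and reading off a factor of $p$ in $z(q)$". That is a hope, not an argument, and it looks at the wrong object: the invariance \eqref{12oct2013} is not visible in the coefficients $A_n$ of $F$ alone, since the mirror map involves the logarithmic solution through $D=G/F$, and you give no mechanism converting a failure of \eqref{12oct2013} into a denominator in a specific coefficient of $z(q)$. The paper's route consists of three steps you would need to supply: (i) if $q(a,b|z)$ is $p$-integral, Lemma \ref{dieudonne} gives $D(a,b|z^p)\equiv pD(a,b|z)\pmod{p\Z_p[[z]]}$, and combining this with Theorem \ref{9/11} yields the congruence \eqref{4dec2012}, $D(\dwork{p}(a),\dwork{p}(b)|z)\equiv D(a,b|z)\pmod{p\Z_p[[z]]}$ (Corollary \ref{nopint}); (ii) the purely algebraic Lemma \ref{31may2013}: writing the coefficients of $z$ and $z^2$ of $D$ in the symmetric functions $\sigma=a+b$, $\tau=ab$, their equality in characteristic $p$ forces $\{\dwork{p}(a),\dwork{p}(b)\}\equiv\{a,b\}$ or $\{1-a,1-b\}\pmod{p}$; (iii) the hypothesis $p>2m_1m_2$ then upgrades this mod-$p$ congruence to an equality of rational numbers, because $\dwork{p}$ preserves denominators and all numerators involved are smaller than $p$. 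Note that (iii), not "good reduction" of the parameters, is where the bound $p>2m_1m_2$ is actually used — $p$-integrality of $a,b$ only needs $p\nmid 2m_1m_2$ — and your proposal never isolates the step where the stronger bound matters. Without (i)--(iii), what you have proves only the "if" direction of the theorem.
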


We need the condition $p>2m_1m_2$ for the 'if' part of the theorem and the 'only if' part only requires only that $p$ does not divide $2m_1m_2$.
Some computations show that the theorem must be valid with this weak hypothesis on $p$. For example for $(2,5,\infty)$, our 
experimental computations shows that the $J$ function up to $183$ terms is $p$-integral for $p=11,19$ (see  below, Corollary \ref{hecke}).
\begin{coro}
 \label{arithmatic}
The Hauptmodul $J$ for a triangle group is  almost integral if and only if 
\begin{align*}
(m_1,m_2,\infty)=&(2,3,\infty),\, (2,4,\infty),\, (2,6,\infty),\, (2,\infty,\infty),\, (3,3,\infty),\,(3,\infty,\infty)\\
&(4,4,\infty),(6,6,\infty),(\infty,\infty,\infty).
\end{align*}
\end{coro}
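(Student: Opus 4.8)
The plan is to combine Theorem \ref{maintheo} with Dirichlet's theorem on primes in arithmetic progressions. First I would record that the sign condition in Theorem \ref{maintheo} simplifies: since the quantifier ``for some $\epsilon,\epsilon'$'' distributes over the disjunction and $\epsilon'$ is free, for a prime $p>2m_1m_2$ the Hauptmodul $J$ of type $(m_1,m_2,\infty)$ is $p$-integral if and only if
\[
\bigl(p\equiv\pm1 \pmod{2m_1}\ \text{and}\ p\equiv\pm1 \pmod{2m_2}\bigr)\ \text{or}\ \bigl(p\equiv m_1\pm1 \pmod{2m_1}\ \text{and}\ p\equiv m_2\pm1 \pmod{2m_2}\bigr),
\]
the two signs in each clause being independent. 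Put $L=\mathrm{lcm}(2m_1,2m_2)$. Every prime $p>2m_1m_2$ is coprime to $L$, and the displayed condition depends only on $p\bmod L$. Hence, discarding the finitely many primes $p\le 2m_1m_2$, the function $J$ is almost integral if and only if the above congruence is satisfied by \emph{every} residue class modulo $L$ coprime to $L$; the nontrivial direction is that by Dirichlet each such class contains infinitely many primes, so a single bad class already yields infinitely many non-integral primes.

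Next I would extract a necessary ``coverage'' condition. Assuming $J$ almost integral, for each residue $c$ coprime to $2m_1$ choose a prime $p\equiv c\pmod{2m_1}$ for which the congruence holds; its structure forces $c\in\{1,-1,m_1-1,m_1+1\}$, and symmetrically $p\bmod 2m_2\in\{1,-1,m_2-1,m_2+1\}$. Thus every residue coprime to $2m_1$ lies in a set of size at most four, forcing $\varphi(2m_1)\le4$, and the same for $m_2$. Inspecting the finitely many even moduli with $\varphi(2m)\le4$ and checking the covering set by hand leaves exactly $m\in\{2,3,4,6\}$ (for instance $m=5$ fails, since $3,7$ are coprime to $10$ but not among $\{1,-1,4,6\}$). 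It remains to impose the coupling between the two moduli. For $m\ge3$ the sets $\{1,-1\}$ and $\{m-1,m+1\}$ are disjoint modulo $2m$, so each coprime residue carries a well-defined ``type'' (type I if $\equiv\pm1$, type II if $\equiv m\pm1$), and the congruence says precisely that $p$ has the \emph{same} type modulo $2m_1$ and modulo $2m_2$. For $m=2$ the two sets coincide modulo $4$, so every prime is simultaneously of both types and the coupling is vacuous.

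Finally I would run the finite case analysis over hyperbolic pairs $m_1\le m_2$ with $m_1,m_2\in\{2,3,4,6\}$, together with the infinite cases. When $m_1=2$ the coupling is vacuous and only coverage of $m_2$ remains, yielding $(2,3,\infty),(2,4,\infty),(2,6,\infty)$ (the pair $(2,2)$ being non-hyperbolic). For $m_1=m_2$ the two moduli agree, types match automatically, and $(3,3,\infty),(4,4,\infty),(6,6,\infty)$ survive. Every off-diagonal pair with $3\le m_1<m_2$ is eliminated by exhibiting one coprime residue modulo $L$ with disagreeing types: e.g.\ $p\equiv7\pmod{24}$ is type I modulo $8$ but type II modulo $12$, killing $(4,6,\infty)$, while any prime $\equiv5\pmod{8}$, resp.\ $\equiv5\pmod{12}$, is type II and clashes with the type-I-only modulus $6$, killing $(3,4,\infty)$ and $(3,6,\infty)$. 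For the regime $(m,\infty,\infty)$ the second part of Theorem \ref{maintheo} requires $p\equiv\pm1\pmod{2m}$ for almost all $p$, i.e.\ $\varphi(2m)=2$, holding exactly for $m\in\{2,3\}$ and giving $(2,\infty,\infty),(3,\infty,\infty)$; the case $(\infty,\infty,\infty)$ is integral by \cite{hokh2}. Collecting these reproduces the stated list. The only genuine obstacle is the bookkeeping of type-matching across the two moduli, but once phrased through Dirichlet and the type dichotomy it reduces to the short finite verification above.
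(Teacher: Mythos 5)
Your proof is correct and follows essentially the same route as the paper: Theorem \ref{maintheo} combined with Dirichlet's theorem forces each $m_i$ into $\{2,3,4,6,\infty\}$, followed by a finite case analysis of the remaining pairs. The paper compresses that last step into the phrase ``checking all possibilities,'' whereas you spell out the type-matching (coupling) argument that eliminates the off-diagonal pairs $(3,4,\infty)$, $(3,6,\infty)$, $(4,6,\infty)$; this is a welcome elaboration of the same method, not a different one.
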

This is the Takeuchi's classification in \cite{tak77} of arithmetic triangle 
groups with a cusp and of type $(m_1,m_2,\infty)$. For explicit uiformizations of modular curves attached to these 9 cases see \cite{bayer}.
%Such a group appears as  the monodromy group of the Gauss hypergeometric equation.
\begin{coro}
\label{hecke}
Let $3\leq n\in \N$. For a prime $p>4n$ the Hauptmodul $J$ of the Hecke group $\Gamma_{(2,n,\infty)}$ 
is $p$-integral if and only if $ p\equiv \pm 1 \pmod n$.
\end{coro}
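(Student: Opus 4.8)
The plan is to deduce this statement directly from Theorem \ref{maintheo} by specializing $(m_1,m_2)=(2,n)$ and then collapsing the resulting congruence conditions. Since $n\ge 3$ we have $m_1=2\le n=m_2$, so the hypotheses of the theorem hold, and $2m_1m_2=4n$ matches the assumption $p>4n$. With these values $2m_1=4$ and $2m_2=2n$, so the two alternatives of Theorem \ref{maintheo} become
\begin{align*}
&\left(p\stackrel{4}{\equiv}\epsilon,\ p\stackrel{2n}{\equiv}\epsilon'\epsilon\right),\\
&\left(p\stackrel{4}{\equiv}2+\epsilon,\ p\stackrel{2n}{\equiv}n+\epsilon'\epsilon\right),
\end{align*}
to be satisfied for some $\epsilon,\epsilon'\in\{\pm1\}$.

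First I would exploit that $p>4n\ge 12$ is an odd prime, hence $p\equiv 1$ or $3\pmod 4$ and $p$ is coprime to $2n$. In each alternative the congruence modulo $4$ pins down a unique value of $\epsilon$ while leaving $\epsilon'$ free: in the first alternative $\epsilon$ is forced by $p\bmod 4$, and as $\epsilon'$ ranges over $\{\pm1\}$ the product $\epsilon'\epsilon$ ranges over $\{\pm1\}$, so the first alternative is satisfiable exactly when $p\stackrel{2n}{\equiv}\pm1$. Since $2+\epsilon\in\{1,3\}$, the congruence modulo $4$ in the second alternative likewise fixes $\epsilon$, and the remaining congruence $p\stackrel{2n}{\equiv}n+\epsilon'\epsilon$ becomes satisfiable exactly when $p\stackrel{2n}{\equiv}n\pm1$. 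Thus $J$ is $p$-integral if and only if
$$
p\stackrel{2n}{\equiv}\pm1 \quad\text{or}\quad p\stackrel{2n}{\equiv}n\pm1 .
$$

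Finally I would pass from this condition modulo $2n$ to the single condition modulo $n$ in the statement. Writing $-1\equiv 2n-1\pmod{2n}$, the four admissible residues are $\{1,\,2n-1,\,n+1,\,n-1\}\pmod{2n}$, which is precisely the preimage of $\{1,\,n-1\}=\{\pm1\}\pmod n$ under the reduction $\Z/2n\to\Z/n$, since $1,n+1\mapsto 1$ and $n-1,2n-1\mapsto-1$. As $p$ is coprime to $2n$, this proves the displayed condition is equivalent to $p\equiv\pm1\pmod n$. The substantive content lies entirely in Theorem \ref{maintheo}; the only point requiring care here is the bookkeeping, namely that it is the freedom in $\epsilon'$ (not $\epsilon$, which is forced by $p\bmod 4$) that yields both signs, and that the two residue families modulo $2n$ coalesce into exactly $p\equiv\pm1\pmod n$.
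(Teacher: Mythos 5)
Your proposal is correct and follows essentially the same route as the paper's own proof: specialize Theorem \ref{maintheo} to $(m_1,m_2)=(2,n)$, observe that the congruence modulo $2m_1=4$ is automatically satisfiable for any odd prime (it merely fixes $\epsilon$, while the freedom in $\epsilon'$ supplies both signs modulo $2n$), and then collapse the four residues $\pm 1,\ n\pm 1 \pmod{2n}$ to the single condition $p\equiv\pm 1\pmod n$. The paper states this more tersely, but your bookkeeping with $\epsilon$ and $\epsilon'$ is exactly the precise version of its remark that the first condition ``automatically holds for $m_1=2$.''
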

We remind that Corollary \ref{hecke} was a conjecture made by Leo Garret in his PhD thesis \cite{leo}. He proved some partial results
in this direction. Precisely, he showed that if $p\equiv 1 \mod {4n}$, then $J$ is $p$-integral.
\begin{coro}
\label{basis}
Let $p>2m_1m_2$ be a prime number.  The algebra of automorphic forms for the triangle group of type $(m_1,m_2,\infty)$ is $p$-integral if and only if $p$ 
satisfies the conditions of Theorem \ref{maintheo}.
\end{coro}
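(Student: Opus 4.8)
The plan is to reduce the integrality of the whole algebra to that of the single Hauptmodul $J$, which is governed by Theorem \ref{maintheo}. Write $X:=t_1-t_2$ and $W:=t_3-t_2$ for the two weight-$2$ building blocks, so that the definitions of the generators (see \eqref{ek2}) read $E_{2k}^{(1)}=XW^{k-1}$ and $E_{2k}^{(2)}=X^{k-1}W$. From \eqref{11sep} one has $1-J=\frac{t_2-t_1}{t_3-t_1}$, whence a one-line computation gives $X/W=\frac{t_1-t_2}{t_3-t_2}=\frac{J-1}{J}$, and therefore
\[
E_{2k}^{(1)}=\frac{J-1}{J}\,W^{k},\qquad E_{2k}^{(2)}=\Big(\frac{J-1}{J}\Big)^{k-1}W^{k}.
\]
Thus every generator is a product of powers of the weight-$0$ functions $J,\ (J-1)/J$ and of the single weight-$2$ form $W$. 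Since the reachable monomials in the generators are $\C$-linearly independent (the function $X/W$ is nonconstant) and provide a graded basis of the algebra, integrality of the algebra is equivalent to integrality of all the $E_{2k}^{(i)}$, and the displayed formulas let one pass between the generators and the pair $J,W$.

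The linchpin is that $W$ is already forced by $J$. Substituting the constraints \eqref{abc} into the Halphen system \eqref{tatlis} and simplifying the logarithmic derivatives of $t_3-t_2$ and of $t_3-t_1$, one obtains $\dot J/J=t_2-t_1$; using $t_2-t_1=(1-J)(t_3-t_1)$ and $t_3-t_1=W/J$ this rearranges to the key identity
\[
\dot J=(1-J)\,W,\qquad\text{equivalently}\qquad W=\frac{\dot J}{1-J}.
\]
I would state this identity as a short preliminary lemma. Now recall that at the cusp $J$ has a simple pole; after the normalization $J=q^{-1}+\sum_{n\ge 0}a_nq^{n}$ the transcendental rescaling constant is absorbed and $p$-integrality of $J$ means $a_n\in\Z_{(p)}$. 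Under this normalization $\dot J=q\,dJ/dq$ is $p$-integral whenever $J$ is (the operator $q\,d/dq$ multiplies the $n$-th coefficient by $n$), and $1-J=-q^{-1}(1+a_0q+\cdots)$ is invertible in $q\,\Z_{(p)}[[q]]$, so $1/(1-J)$, $1/J$ and $(J-1)/J=1-1/J$ all lie in $\Z_{(p)}[[q]]$. Hence $W=\dot J/(1-J)$ is $p$-integral as soon as $J$ is.

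For the ``if'' direction, suppose $p$ satisfies the congruences of Theorem \ref{maintheo}; then $J$ is $p$-integral, hence so are $W$ and $(J-1)/J$, hence so is every $E_{2k}^{(i)}$ by the displayed formulas, and the monomials in the generators give a $p$-integral basis of the algebra. For the ``only if'' direction, suppose the algebra is $p$-integral, so all generators are $p$-integral. When $m_1\ge 3$ the ratio $E_{6}^{(2)}/E_{6}^{(1)}=X/W=(J-1)/J$ is $p$-integral; when $m_1=2$ only the second family exists, and $(E_4^{(2)})^3/(E_6^{(2)})^2=W/X=J/(J-1)$ is $p$-integral. Either way $(J-1)/J=1-1/J$, hence $1/J$, and (inverting at the simple pole) $J$ itself, is $p$-integral, so Theorem \ref{maintheo} yields the stated congruences. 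This proves the equivalence and the corollary.

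The step I expect to be the main obstacle is the identity $\dot J=(1-J)W$: everything hinges on the weight-$2$ form $W$ being, up to the $p$-integral factor $(1-J)^{-1}$, a $q$-derivative of $J$, and establishing this cleanly requires the explicit substitution of \eqref{abc} into \eqref{tatlis} together with the cancellation of the terms weighted by $1/m_1$ and $1/m_2$ in $\dot J/J=\dot N/N-\dot D/D$. A secondary technical point is the bookkeeping at the cusp, namely arranging the normalization so that the leading coefficient of $J$ is a $p$-adic unit, which is what keeps the inversions $1/(1-J)$, $1/J$ and the passage from $1/J$ back to $J$ inside $\Z_{(p)}[[q]]$; here the hypothesis $p>2m_1m_2$ ensures that the constants $1/m_1,1/m_2$ occurring along the way are $p$-units and introduce no new denominators.
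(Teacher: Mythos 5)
Your proposal is correct and follows essentially the same route as the paper's own proof: both reduce, via Theorem \ref{maintheo}, to the equivalence ``$J$ is $p$-integral iff the algebra is'', prove the ``if'' direction from the same Halphen-derived identities ($t_3-t_2=\pm\dot J/(J-1)$, $t_1-t_2=\pm\dot J/J$, so every generator is a Laurent monomial in $J$, $J-1$, $\dot J$), and prove the ``only if'' direction by recovering a M\"obius transform of $J$ as a rational combination of generators (the paper writes $J=E_4^3/(E_4^3-E_6^2)$ with $E_k=E_k^{(2)}$, while you use $E_6^{(2)}/E_6^{(1)}=(J-1)/J$, resp.\ $(E_4^{(2)})^3/(E_6^{(2)})^2$, and then invert at the simple pole). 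One incidental slip, harmless to the argument: monomials in the generators are \emph{not} $\C$-linearly independent (for instance $E_6^{(1)}E_6^{(2)}=(E_4^{(2)})^3$); what you actually need, and what is true, is only that they span the algebra, so that a $p$-integral basis can be extracted from them.
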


Integrality  problem for the coefficients of modular forms for noncongruence subgroups of $\Gamma(1)=\SL 2\Z(=\Gamma_{(2,3,\infty)}$)
was a task in \cite{scholl}.  There,  Scholl proves that  there exist  positive integers $d$ and $N$ such that
$d^na_n\in\mathcal{O}_{F}[\frac{1}{N}]$, where $a_n$ is the  $n$-th Fourier coefficient of a modular form of weight
$k\in \frac{1}{2}\Z$ for some subgroup of $\Gamma(1)$ and $F$ a number field.
A conjecture of Atkin and Swinnerton-Dyre predicts that $N=1$ if and only if the subgroup contains a congruence subgroup 
(see \cite{atkin}).
The result of Scholl implies that at most finitely many distinct primes can appear in the denominators of modular forms for 
a noncongruence subgroup of $\Gamma(1)$. On the other
hand, when the group is not commensurable with $\Gamma(1)$, one would expect infinitely many primes in the denominators.
This prediction is compatible with our result in the case of hyberpolic triangle groups. 

The paper is organized in the following way. 
In \S\ref{dwork} we introduce the main technique for establishing the results of the paper. 
This is namely the Dwork method which is based on a Lemma due to Dieudonn\'e and a Theorem due to Dwork,
see \cite{dwo73, dw94}. %In \S\ref{comalg} we  present a lemma  whose proof requires an explicit  primary decomposition of an ideal. 
In \S\ref{proof} we prove the corollaries and give the proof of the main theorem.  

The second author would like to thank CNPq-Brazil for financial support and IMPA for its lovely research ambient.

%%%%%%%%%%%%%%%%%%%%%%%%%%%%%%%%%%%%%%%%%%%%%%%%5
\section{Dwork method}
\label{dwork}
The main idea in the proof of Theorem \ref{29oct} is based on the Dwork method. Here we briefly review this method. 
\subsection{Dwork map}
\label{dworkmap}
\def\CC{\tilde \Z}
%For a rational number $a$, we write $r=\frac{r_1}{r_2}$, where $r_1\in \Z$ and $r_2\in\N$ are respectively the nominator and denominator of $r$ and so
% $(r_1,r_2)=1$.   %We also denote by $(a \mod b)$ the integer $0\leq x\leq b-1$ such that $a \equiv_b x$. 
For the $p$-adic integers $\Z_p$, the Dwork map $\dwork{p}: \Z_p\to \Z_p$ is given by
$$
 x=\sum_{s=0}^\infty x_sp^s \longmapsto 1+\sum_{s=0}^\infty x_{s+1}p^s,\ \ \ 0\leq x_s\leq p-1.
$$
In other words, for every $x$, with $x\equiv x_0\pmod {p\Z_p}$,  $\dwork{p}(x):=1+\frac{x-x_0}{p}$.
Denote by  $\Z_{(p)}$ the set of $p$-integral rational numbers.
% which are not in $\Z^{-}\cup\{0\}$.
We have a natural embedding $
\Z_{(p)}\hookrightarrow \Z_p$. 
The map $\dwork{p}$ leaves $\Z_{(p)}$ invariant because for $x\in \Z_{(p)}$, $\dwork{p}(x)$ is the unique number such that
$p\dwork{p}(x)-x \in \Z\cap [0,p-1]$. 
For rational numbers there exists an alternative definition for  the Dwork map as follows.
Let  $x=\frac{x_1}{x_2},$ with $x_1$ and  $x_2>0$ integers and a prime $p$ which does not divide $x_2$, we have
\begin{equation}
\label{121212}
\dwork{p}(x):=\frac{p^{-1}x_1 \hbox{ mod } x_2}{x_2},
\end{equation}
where $p^{-1}$ is the inverse of $p$ mod $x_2$ (note that $x_1$ and $x_2$ may have common factors). 
The denominators of $x$ and $\dwork{p}(x)$ are the same and 
$\dwork{p}(1-x)=1-\dwork{p}(x)$.  
%\label{10dec2012}
 For any finite set of rational numbers, there is a finite decomposition of prime numbers such that in each class the 
function $\dwork{p}$ is independent of
the prime $p$. Indeed for the set of  primes $p \stackrel{x_2}{\equiv} r$,  $\dwork{p}(x)$ only depends on $x$ and $r$. 
%As an special case we have the following equivalence:
%\begin{equation}
 %\label{deltapp'}
%\dwork{p}(x)+\dwork{p'}(x)=1 \Longleftrightarrow p+p'\equiv 0 \pmod {x_2}.
%\end{equation}
%We use this property in the proof of Lemma \ref{1july2013} and Theorem \ref{maintheo}.
\subsection{Gauss hypergeometric function}
Let us consider the following hypergeometric differential operator
\begin{equation}
 \label{7july13}
L:\theta^2-z(\theta+a)(\theta+b),
\end{equation}
with $\theta= z\frac{d}{dz}$ and 
\begin{equation}
\label{tarifab}
a=\frac{1}{2}(1-\frac{1}{m_1}+\frac{1}{m_2}), \ \
b=\frac{1}{2}(1-\frac{1}{m_1}-\frac{1}{m_2}),
\end{equation}
where $2 \leq m_1,m_2\in \N\cup\{\infty\}$ and $\frac{1}{m_1}+\frac{1}{m_2}<1$. Note that these $a,b$ are slightly 
different from those in the introduction. From now on we will only use (\ref{tarifab}). 
The monodromy group of the corresponding differential equation is the triangle group  of type $(m_1,m_2,\infty)$, see for instance 
\cite{hokh2}.
The Frobenius basis of \eqref{7july13} around $z=0$ is given by
$\{F(z), F(z)\log z+G(z)\}$, where
\begin{align}
F(a,b|z)&=1+\sum_{i=1}^\infty A_i(a,b)z^i=1+\sum_{n=1}^\infty \frac{(a)_n(b)_n}{n!^2}z^n,\\
G(a,b|z)&=\sum_{i=1}^\infty B_i(a,b)z^i=
\sum_{n=1}^\infty \frac{(a)_n(b)_n}{n!^2}(\sum_{i=0}^{n-1}\frac{1}{a+i}+\frac{1}{b+i}-\frac{2}{1+i})z^n.
\end{align}
Let us define
\begin{equation}
\label{defDq}
 D(a,b|z):=\frac{G(a,b|z)}{F(a,b|z)} ,\ \ \ \ q(a,b|z):=z\exp (D(a,b|z)),
\end{equation}
and $D$ is called the Schwarz map. The Hauptmodul $J$ introduced in the Introduction is given by
$$
J=\frac{1}{z(\kappa\cdot q)},\ \ \ \kappa:=-2m_1^2m_2^2,
$$
where $z(q)$ is the inverse of $q$ as a function in $z$, for more details see \cite{hokh2}.

%It is well-known that the image of the Schwarz function 
%\marginpar{\tiny add some explanation about the connection of this with the Halphen system and what is the exact formula of 
%J in this interpretation also for proposition 1 is needed}
%$D(a,b|z):=\frac{G(a,b|z)}{F(a,b|z)}$
%is the upper half plane $\uhp$ and the inverse of $q(a,b|z):=z\exp (D(a,b|z))$  is a holomorphic function from $\uhp$ to $\C$.

\subsection{Dwork's Theorem}
The following lemma is the additive version of Dieudonn\'{e}-Dwork lemma and 
frequently is used in the proof of $p$-integrality of power series.
\begin{lemm}
\label{dieudonne}
Let $u(z)\in z\Q_p[[z]]$. Then $\exp(u(z))\in 1+z\Z_p[[z]]$, if and only if 
$$
\exp(u(z^p)-p\,u(z))\in 1+p\Z_p[[z]].
$$
\end{lemm}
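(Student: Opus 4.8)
The plan is to recognize this as the additive form of the classical Dieudonn\'e--Dwork lemma and to reduce it to the multiplicative version. Since $u\in z\Q_p[[z]]$ has no constant term, $F:=\exp(u(z))$ is a well-defined element of $1+z\Q_p[[z]]$, and the formal identity $\exp(a)/\exp(b)=\exp(a-b)$ gives
$$
\exp\bigl(u(z^p)-p\,u(z)\bigr)=\frac{\exp(u(z^p))}{\exp(u(z))^p}=\frac{F(z^p)}{F(z)^p}.
$$
Hence the statement is \emph{exactly}: for $F\in 1+z\Q_p[[z]]$ one has $F\in 1+z\Z_p[[z]]$ if and only if $F(z^p)/F(z)^p\in 1+p\Z_p[[z]]$. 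I would prove this equivalence for arbitrary such $F$.

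The one external tool I would use is the Frobenius congruence: for any $h=\sum_k a_k z^k\in\Z_p[[z]]$ one has $h(z)^p\equiv h(z^p)\pmod{p\Z_p[[z]]}$, which follows from $(x+y)^p\equiv x^p+y^p\pmod p$ applied coefficientwise together with $a^p\equiv a\pmod p$ for $a\in\Z_p$ (reduce mod $p$ and apply Fermat in $\Ff_p$). This settles the ``only if'' direction at once: if $F\in 1+z\Z_p[[z]]$ then $F(z^p)-F(z)^p\in p\Z_p[[z]]$, and since $F(z)^p$ is a unit in $\Z_p[[z]]$ (constant term $1$), dividing gives $F(z^p)/F(z)^p\in 1+p\Z_p[[z]]$.

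For the ``if'' direction I would write $F=1+\sum_{n\ge1}c_n z^n$ and prove $c_n\in\Z_p$ by strong induction on $n$. Note first that, the quotient $F(z^p)/F(z)^p$ having constant term $1$, the hypothesis reads $F(z^p)=F(z)^p\,(1+pz\,G(z))$ with $G\in\Z_p[[z]]$. Assuming $c_1,\dots,c_{n-1}\in\Z_p$, set $F_{n-1}:=1+c_1z+\cdots+c_{n-1}z^{n-1}\in\Z_p[z]$. Working modulo $z^{n+1}$ one checks the two expansions
$$
F(z)^p\equiv F_{n-1}(z)^p+p\,c_n z^n,\qquad F(z^p)\equiv F_{n-1}(z^p)\pmod{z^{n+1}},
$$
the first because in $(F_{n-1}+c_nz^n+\cdots)^p$ every contribution beyond $F_{n-1}^p$ and the linear term $p\,F_{n-1}^{p-1}c_nz^n$ has degree $>n$, while $F_{n-1}^{p-1}\equiv 1\pmod z$. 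Substituting into the functional equation yields, modulo $z^{n+1}$,
$$
F_{n-1}(z^p)-F_{n-1}(z)^p\equiv p\,c_n z^n+pz\,G(z)F(z)^p.
$$

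The delicate point, which I expect to be the main obstacle, is that $c_n$ enters only through the factor $p\,c_n z^n$: a naive reduction modulo $p$ annihilates it and yields at best $v_p(c_n)\ge -1$, which is not enough. The resolution is to \emph{divide by $p$ first}. By the Frobenius congruence the left-hand side $F_{n-1}(z^p)-F_{n-1}(z)^p$ lies in $p\Z_p[z]$, and the coefficient of $z^n$ in $zG(z)F(z)^p$ equals the coefficient of $z^{n-1}$ in $G(z)F(z)^p$, which involves only the coefficients of $F^p$ in degrees $\le n-1$ --- polynomials in $c_1,\dots,c_{n-1}$, hence $p$-integral. Thus after dividing the displayed relation by $p$ one obtains an identity with $p$-integral coefficients whose coefficient of $z^n$ reads $c_n=(\text{integral})-(\text{integral})$, forcing $c_n\in\Z_p$ and closing the induction. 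The whole argument rests only on the unit structure of $1+z\Z_p[[z]]$ and the Frobenius congruence; the crux is that the spurious factor of $p$ on $c_n$ is cancelled exactly by the $p$ supplied by Frobenius.
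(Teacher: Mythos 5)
Your proof is correct, and it fills in what the paper itself leaves out: the paper does not prove Lemma \ref{dieudonne} at all, but simply refers to \cite{dw94}, p.~54. Your argument --- rewriting $\exp(u(z^p)-p\,u(z))$ as $F(z^p)/F(z)^p$ and then proving the multiplicative Dieudonn\'e--Dwork equivalence by strong induction on the coefficients, with the Frobenius congruence $h(z)^p\equiv h(z^p)\pmod{p\Z_p[[z]]}$ supplying exactly the factor of $p$ needed to isolate $c_n$ --- is precisely the classical proof given in that reference, so you have reproduced the intended argument in a complete and self-contained way.
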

For a more general statement and the proof  see \cite{dw94}, p.54.
The following theorem is the main part of Dwork method. 
\begin{theo}
\label{9/11}
Let $D$ be the Schwarz map, defined in \eqref{defDq} and $p$ a prime number coprime with $2m_1m_2$. We have
$$
D(\dwork{p}(a),\dwork{p}(b)|z^p))\equiv pD(a,b|z) \pmod {p\Z_p[[z]]}.
$$
\end{theo}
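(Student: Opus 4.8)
The plan is to split the claim into a multiplicative Dwork congruence for the series $F$ and a logarithmic companion for $G$, and then to read off the statement from $D=G/F$. Put $a'=\dwork{p}(a)$ and $b'=\dwork{p}(b)$; since $p$ is coprime to $2m_1m_2$, the denominators of $a,b$ (which divide $2m_1m_2$) are prime to $p$, so $a,b,a',b'\in\Z_{(p)}$ and all the series are defined. First I would establish the two relations
$$F(a,b|z)=U(z)\,F(a',b'|z^p),\qquad U(z)\in 1+z\Z_p[[z]],$$
$$G(a,b|z)=U(z)\Big(\tfrac1p\,G(a',b'|z^p)+V(z)\,F(a',b'|z^p)\Big),\qquad V(z)\in\Z_p[[z]].$$
Granting them, the theorem follows in one line: dividing the second by the first,
$$D(a,b|z)=\frac{G(a,b|z)}{F(a,b|z)}=\tfrac1p\,\frac{G(a',b'|z^p)}{F(a',b'|z^p)}+V(z)=\tfrac1p\,D(a',b'|z^p)+V(z),$$
whence $D(a',b'|z^p)=p\,D(a,b|z)-p\,V(z)\equiv p\,D(a,b|z)\pmod{p\Z_p[[z]]}$.

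\textbf{The congruence for $F$.} This is the classical Dwork congruence \cite{dwo73,dw94}, and the route I would take is the $p$-adic bookkeeping of the coefficients $w_n=(a)_n(b)_n/n!^2$. In the product $(a)_n=\prod_{k=0}^{n-1}(a+k)$, let $k_a\in\{0,\dots,p-1\}$ be the integer with $p\,a'=a+k_a$; the factors with $k\equiv k_a\pmod p$ are precisely the numbers $a+k_a+jp=p(a'+j)$, so they reproduce a shifted Pochhammer symbol $(a')_{\lfloor n/p\rfloor}$ together with the powers of $p$ that cancel those accumulated by $n!$. The remaining factors run over the nonzero residues in each block of length $p$ and contribute a unit by Wilson's theorem. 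Matching these contributions in numerator and denominator shows that $F(a,b|z)/F(a',b'|z^p)$ has $p$-integral coefficients and constant term $1$, i.e.\ equals a unit $U(z)$.

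\textbf{The congruence for $G$.} Here I would use that $G$ is the parameter-derivative of $F$ under a simultaneous shift. Setting
$$F^{\epsilon}(a,b|z):=\sum_{n\ge0}\frac{(a+\epsilon)_n(b+\epsilon)_n}{\big((1+\epsilon)_n\big)^2}\,z^n,$$
the explicit series for $G$ gives $G(a,b|z)=\partial_\epsilon F^{\epsilon}(a,b|z)\big|_{\epsilon=0}$. The crucial compatibility is that the Dwork map acts on an infinitesimal shift by $\dwork{p}(x+\epsilon)=\dwork{p}(x)+\epsilon/p$ (the integer $k_x$ with $p\,\dwork{p}(x)=x+k_x$ is locally constant), so the parametrised version of the congruence for $F$ reads $F^{\epsilon}(a,b|z)=U^{\epsilon}(z)\,F^{\epsilon/p}(a',b'|z^p)$ with $U^{\epsilon}\in\Z_p[[\epsilon,z]]$ and $U^{0}=U$. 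Differentiating at $\epsilon=0$, and using $\partial_\epsilon F^{\epsilon/p}(a',b'|z^p)\big|_{0}=\tfrac1p\,G(a',b'|z^p)$, yields exactly the second relation with $V(z)=\partial_\epsilon\log U^{\epsilon}(z)\big|_{0}$; the factor $\tfrac1p$ is produced by the chain rule on $F^{\epsilon/p}$.

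\textbf{Main obstacle.} The whole difficulty is $p$-integrality, and specifically keeping every factor of $1/p$ confined to the single term $\tfrac1p G(a',b'|z^p)$. The harmonic sums in $G$ carry terms $1/(a+i)$ with $a+i\equiv0\pmod p$, so a priori $V$ could land in $\tfrac1p\Z_p[[z]]$; proving $V(z)\in\Z_p[[z]]$ amounts to establishing Dwork's congruence \emph{uniformly in the parameter} $\epsilon$, i.e.\ that the unit $U^{\epsilon}$ depends $p$-integrally on $\epsilon$, which is the technical heart furnished by the Dwork method of \cite{dwo73,dw94}. As an independent check one may instead differentiate the logarithm of the canonical coordinate through the Wronskian identity $\frac{d}{dz}\log q(a,b|z)=(1-z)^{-(a+b)}/\big(z\,F(a,b|z)^2\big)$, which rephrases the target congruence purely in terms of $F$ and the elementary factor $(1-z)^{-(a+b)}$.
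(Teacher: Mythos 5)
You have reorganized the statement correctly, but you have not proved it. For context: the paper itself offers no proof of Theorem \ref{9/11} --- it is quoted as Dwork's theorem, and the proof is deferred to \cite{dwo73} in the remark immediately following the statement. So your proposal has to be measured against Dwork's argument, and there it falls short at one decisive place.

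Write $a'=\dwork{p}(a)$, $b'=\dwork{p}(b)$ as you do. Your first relation, $F(a,b|z)=U(z)F(a',b'|z^p)$ with $U\in 1+z\Z_p[[z]]$, is trivial and needs none of the Pochhammer--Wilson bookkeeping you sketch: for $x\in\Z_p$ one has $(x)_n/n!=\binom{x+n-1}{n}\in\Z_p$, because $\binom{y}{n}$ maps $\Z$ to $\Z$ and extends continuously to $\Z_p$; hence $F(a,b|z)$ and $F(a',b'|z^p)$ both lie in $1+z\Z_p[[z]]$, the latter is a unit of $\Z_p[[z]]$, and $U$ is automatically of the stated form. (The coefficient bookkeeping you describe is what proves the \emph{finer} Dwork congruence --- $U$ congruent mod $p$ to a truncation of $F$ --- a statement you never use.) Consequently, the entire content of the theorem is concentrated in your second relation: granting the first, $V(z)=D(a,b|z)-\tfrac1p D(a',b'|z^p)$, so the claim $V\in\Z_p[[z]]$ is not a stepping stone, it \emph{is} the theorem, verbatim. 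Your route to it --- differentiating a parametrized congruence $F^{\epsilon}(a,b|z)=U^{\epsilon}(z)\,F^{\epsilon/p}(a',b'|z^p)$ at $\epsilon=0$, with the correct observation that $\dwork{p}$ acts on the deformation by $\epsilon\mapsto\epsilon/p$, and a correct chain-rule computation producing the factor $\tfrac1p$ --- is a genuine strategy. But the crucial claim $U^{\epsilon}\in\Z_p[[\epsilon,z]]$ is exactly as strong as the theorem, and you do not prove it; you attribute it to ``the Dwork method.'' It is not obviously the statement proved in \cite{dwo73}, which treats fixed $p$-integral parameters with lower parameter $1$: here the lower parameter is $1+\epsilon$ and the congruence must hold uniformly in $\epsilon$ after the non-integral substitution $\epsilon/p$, so the cancellation of the $p$'s produced by the harmonic-sum terms $1/(a+i)$ with $a+i\equiv 0\pmod{p}$ --- which you yourself flag as the main obstacle --- still has to be carried out (this is closer in spirit to Dwork's later Boyarsky-principle results than to the theorem of \cite{dwo73}). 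Until that is done, your proposal is a correct reformulation plus a plausible plan, with the $p$-adic heart of the argument missing at precisely the point where the paper, too, defers to Dwork.
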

As a remark we mention that the original Dwork's theorem is valid  not only for arbitrary $a,b\in \Z_p$ but also for
generalized hypergeometric series. For a proof see \cite{dwo73}.  
\begin{coro}
\label{25nov}
If
 \begin{equation}
\label{12oct2013}
 \{\dwork{p}(a),\dwork{p}(b)\}=\{a,b\} \hbox{  or   }  \{1-a,1-b\},
\end{equation}
holds then $q(a,b|z)$ is $p$-integral. 
\end{coro}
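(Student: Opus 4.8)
The plan is to feed Theorem \ref{9/11} into the Dieudonn\'e--Dwork Lemma \ref{dieudonne}, applied to $u(z):=D(a,b|z)\in z\Q_p[[z]]$. Since $q(a,b|z)=z\exp(D(a,b|z))$ by \eqref{defDq}, showing that $q$ is $p$-integral will follow from proving $\exp(u(z))\in 1+z\Z_p[[z]]$; by the lemma this is equivalent to $\exp\big(u(z^p)-p\,u(z)\big)\in 1+p\Z_p[[z]]$. So the whole argument reduces to verifying $u(z^p)-p\,u(z)\in p\Z_p[[z]]$, because any element $p\,h(z)$ of $p\Z_p[[z]]$ with $h(0)=0$ satisfies $\exp(p\,h)\in 1+p\Z_p[[z]]$: the $k$-th term $p^kh^k/k!$ has $p$-adic valuation at least $k-v_p(k!)\ge k-\tfrac{k-1}{p-1}\ge 1$.

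First I would record that $F$, $G$, and hence $D$ are symmetric in the pair $(a,b)$, directly from their series in the excerpt, so $D(\dwork{p}(a),\dwork{p}(b)|z)$ depends only on the unordered set $\{\dwork{p}(a),\dwork{p}(b)\}$. Under hypothesis \eqref{12oct2013} this set is either $\{a,b\}$ or $\{1-a,1-b\}$, and Theorem \ref{9/11} gives
$$
D(\dwork{p}(a),\dwork{p}(b)|z^p)\equiv p\,D(a,b|z)\pmod{p\Z_p[[z]]}.
$$
In the first case $D(\dwork{p}(a),\dwork{p}(b)|z^p)=D(a,b|z^p)$, and we obtain immediately $D(a,b|z^p)\equiv p\,D(a,b|z)$, i.e. $u(z^p)-p\,u(z)\in p\Z_p[[z]]$.

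The crux is the second case, where I must identify $D(1-a,1-b|z)$ with $D(a,b|z)$. For this I would invoke Euler's transformation $F(a,b|z)=(1-z)^{1-a-b}F(1-a,1-b|z)$, valid since the third hypergeometric parameter here equals $1$. Multiplication by the unit $\phi(z):=(1-z)^{1-a-b}\in 1+z\C[[z]]$ is an isomorphism from the solution space of the operator with parameters $(1-a,1-b)$ onto that with parameters $(a,b)$; it sends $F(1-a,1-b|z)$ to $F(a,b|z)$, and the second Frobenius solution $F(1-a,1-b|z)\log z+G(1-a,1-b|z)$ to $F(a,b|z)\log z+\phi(z)G(1-a,1-b|z)$. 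Since $\phi(0)=1$ and $G(1-a,1-b|z)$ has no constant term, the analytic part $\phi(z)G(1-a,1-b|z)$ again has no constant term; by uniqueness of the normalized second Frobenius solution at $z=0$ it must coincide with $G(a,b|z)$. Hence $G(a,b|z)=\phi(z)G(1-a,1-b|z)$ while $F(a,b|z)=\phi(z)F(1-a,1-b|z)$, so $\phi$ cancels in the ratio and $D(1-a,1-b|z)=D(a,b|z)$. Therefore $D(\dwork{p}(a),\dwork{p}(b)|z^p)=D(a,b|z^p)$ in this case as well, and the same congruence $u(z^p)-p\,u(z)\in p\Z_p[[z]]$ holds.

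Combining the two cases, $\exp\big(u(z^p)-p\,u(z)\big)\in 1+p\Z_p[[z]]$, so Lemma \ref{dieudonne} yields $\exp(D(a,b|z))\in 1+z\Z_p[[z]]$ and thus $q(a,b|z)=z\exp(D(a,b|z))\in z\Z_p[[z]]$ is $p$-integral. I expect the main obstacle to be the Euler-transformation step: one has to check that the transformed second solution is still in the normalized shape $F(a,b|z)\log z+(\text{series with vanishing constant term})$, so that uniqueness forces the exact equality $G(a,b|z)=\phi(z)G(1-a,1-b|z)$ rather than an equality only up to an additive multiple of $F(a,b|z)$.
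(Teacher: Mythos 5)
Your proposal is correct and takes essentially the same route as the paper's proof: both feed Dwork's Theorem \ref{9/11} into the Dieudonn\'e--Dwork Lemma \ref{dieudonne} with $u(z)=D(a,b|z)$ for the case $\{\dwork{p}(a),\dwork{p}(b)\}=\{a,b\}$, and both reduce the case $\{1-a,1-b\}$ to the first one via the Euler identity, which forces $D(1-a,1-b|z)=D(a,b|z)$. The extra details you supply (the valuation bound $k-v_p(k!)\geq 1$ for the exponential, and the constant-term/uniqueness argument identifying $\phi(z)G(1-a,1-b|z)$ with $G(a,b|z)$) are exactly the steps the paper compresses into ``the fact that $\ord_p(n!)<n$'' and ``one can easily check.''
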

\begin{proof}
For  $\{\dwork{p}(a),\dwork{p}(b)\}=\{a,b\}$, the statement is an immediate consequence of Dwork's theorem and Lemma \ref{dieudonne}
with $u(z)=D(a,b|z)$ and  the fact that $\ord_p(n!)<n$.
For the second case, thanks to the Euler identity 
$$
F(a,b|z)=(1-z)^{1-a-b}F(1-a,1-b|z),
$$
one can easily check that the logarithmic solution of \eqref{7july13} and so  $G$  satisfy the same identity.
Then the result follows from the first case.
\end{proof}
Corollary \ref{25nov} gives a sufficient condition for $p$-integrality of $q(a,b|z)$. In order to proof Theorem \ref{maintheo} we need also a
necessary condition. The following corollary is an step toward this goal. 
\begin{coro}
\label{nopint}
Let $p$ and $q(a,b|z)$ as before. If the function $q(a,b|z)$ is $p$-integral, then
\begin{equation}
\label{4dec2012}
D(\dwork{p}(a),\dwork{p}(b) |z)\equiv D(a,b|z) \pmod {p\Z_p[[z]]},
\end{equation}
and vice versa.
\end{coro}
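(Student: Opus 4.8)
The plan is to reduce the $p$-integrality of $q(a,b|z)$ to the Dieudonn\'e--Dwork criterion (Lemma \ref{dieudonne}) and then to substitute Dwork's Theorem \ref{9/11} into the resulting exponent. Since $q/z=\exp(D(a,b|z))$ has constant term $1$, the normalizing constant in the definition of $p$-integrality may be taken to be a $p$-adic unit; exactly as in the proof of Corollary \ref{25nov}, ``$q(a,b|z)$ is $p$-integral'' then means $\exp(D(a,b|z))\in 1+z\Z_p[[z]]$. Applying Lemma \ref{dieudonne} with $u(z)=D(a,b|z)$, this is equivalent to
\[
\exp\!\big(D(a,b|z^p)-p\,D(a,b|z)\big)\in 1+p\Z_p[[z]].\qquad(\star)
\]
Thus everything is controlled by the exponent $D(a,b|z^p)-p\,D(a,b|z)$ read modulo $p$.

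Next I would feed in Dwork's Theorem \ref{9/11}, namely $D(\dwork{p}(a),\dwork{p}(b)|z^p)\equiv p\,D(a,b|z)\pmod{p\Z_p[[z]]}$, to rewrite
\[
D(a,b|z^p)-p\,D(a,b|z)\equiv \Delta(z^p)\pmod{p\Z_p[[z]]},\qquad \Delta(z):=D(a,b|z)-D(\dwork{p}(a),\dwork{p}(b)|z).
\]
The difference between the two sides is an element of $p\Z_p[[z]]$ with zero constant term, and such an element exponentiates into $1+p\Z_p[[z]]$ because $\ord_p(p^m/m!)\ge 1$ for $m\ge 1$ (the same valuation estimate already used in Corollary \ref{25nov}). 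Since $1+p\Z_p[[z]]$ is a group under multiplication, factoring off this exponential shows that $(\star)$ is equivalent to $\exp(\Delta(z^p))\in 1+p\Z_p[[z]]$.

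It then remains to establish the clean equivalence $\exp(\Delta(z^p))\in 1+p\Z_p[[z]]\Longleftrightarrow \Delta\in p\Z_p[[z]]$, the right-hand side being precisely the asserted congruence \eqref{4dec2012}. The implication ``$\Leftarrow$'' is again the exponential estimate above. For ``$\Rightarrow$'' I would take the formal logarithm: writing $\exp(\Delta(z^p))=1+pg$ with $g\in z\Z_p[[z]]$, the series $\Delta(z^p)=\log(1+pg)=\sum_{k\ge 1}\frac{(-1)^{k-1}}{k}(pg)^k$ lies in $p\Z_p[[z]]$ because $\ord_p(p^k/k)\ge 1$, and since $z\mapsto z^p$ merely respaces the coefficients, $\Delta\in p\Z_p[[z]]$ follows. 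Reading the resulting chain of equivalences in both directions yields the statement together with its ``vice versa''. The step I expect to require the most care is exactly this passage between congruences inside the exponent and congruences after applying $\exp$ and $\log$: one must check that the relevant valuations $\ord_p(p^m/m!)$ and $\ord_p(p^k/k)$ are all $\ge 1$, so that the exponential and logarithmic series stay $p$-integral and land in $1+p\Z_p[[z]]$ and $p\Z_p[[z]]$ respectively; the remaining manipulations are purely formal operations on power series.
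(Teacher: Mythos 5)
Your proof is correct and takes essentially the same route as the paper's: the Dieudonn\'e--Dwork lemma applied to $u(z)=D(a,b|z)$, substitution of Dwork's Theorem \ref{9/11} into the exponent, and the valuation estimates $\ord_p(p^m/m!)\geq 1$ and $\ord_p(p^k/k)\geq 1$ to pass between congruences inside the exponent and membership in $1+p\Z_p[[z]]$ after applying $\exp$ and $\log$. The only difference is organizational: you run a single chain of equivalences (so the ``vice versa'' comes for free from the group structure of $1+p\Z_p[[z]]$), whereas the paper proves the forward direction by taking the formal logarithm and remarks that the converse is similar.
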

\begin{proof}
If $q(a,b|z)$ is $p$-integral, from Lemma \ref{dieudonne} we have
$$
D(a,b|z^p)-pD(a,b|z)=\log (1+p\, h(z)),
$$
for some $h(z)\in z\Z_p[[z]]$. But 
$$
\log(1+p\,h(z))=\sum_{n=1}^\infty (-1)^n\frac{p^nh(z)^n}{n}\in p\,z\Z_p[[z]]. 
$$
Hence
$$D(a,b|z^p)\equiv pD(a,b|z)\pmod {p\Z_p[[z]]}.$$
Combining with the congruence of Theorem \ref{9/11} the result follows.
The other side is similar.
\end{proof}

%From this corollary one can see that if for example $\{\dwork{p}(a),\dwork{p}(b)\}=\{a,b\}$ then $q$ is $p$-integral.

In the continuation we will  determine  complete conditions such that the congruency  \eqref{4dec2012} holds. We will prove that it is equivalent to
(\ref{12oct2013}) in Corollary \ref{25nov}.  
\begin{lemm}
\label{31may2013}
Let $\k$ be a field of characteristic $p\not=2$ and $a_1,a_2,b_1,b_2\in\k$.  The coefficients of $z^i,\ i=1,2$ 
in  \begin{equation}
%\label{7julio}
D(a_2,b_2|z) \hbox{ and } D( a_1,b_1|z)
\end{equation}
are equal 
if and only if
\begin{equation}
\label{9d2012}
\{a_2,b_2\}=\{a_1,b_1\} \text{ or } \{1-a_1,1-b_1\}.
\end{equation}
\end{lemm}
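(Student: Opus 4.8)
The plan is to exploit the symmetry of $D(a,b|z)$ under $a\leftrightarrow b$ and reduce the whole statement to the elementary symmetric functions $e_1=a+b$ and $e_2=ab$. First I would record that, although $G(a,b|z)$ involves the sums $\sum_{i=0}^{n-1}(\frac1{a+i}+\frac1{b+i}-\frac2{1+i})$, each pole at $a=-i$ (resp. $b=-i$) is cancelled by the Pochhammer factor $(a)_n$ (resp. $(b)_n$) inside $A_n$, so the coefficients of $G$, and hence those of $D=G/F$, are genuine polynomials in $a,b$ over $\Q$; being symmetric they are polynomials in $e_1,e_2$. A direct expansion of $G/F$ up to order $z^2$ then gives, with $d_i$ the coefficient of $z^i$ in $D(a,b|z)$,
$$
d_1=e_1-2e_2,\qquad 4\,d_2=e_1^2+e_1-5\,e_1e_2-e_2+5\,e_2^2 .
$$
Here the hypothesis $\cha\k=p\neq 2$ is exactly what makes the denominator $4$ invertible, so these formulas are valid over $\k$.

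For the ``only if'' direction, suppose $d_1$ and $d_2$ coincide for $(a_1,b_1)$ and $(a_2,b_2)$. Using the first formula I would substitute $e_1=d_1+2e_2$ into the second to rewrite it as the identity (in $a,b$)
$$
e_2^2-(1-d_1)\,e_2+(4d_2-d_1^2-d_1)=0 .
$$
Evaluating at $(a_1,b_1)$ and at $(a_2,b_2)$ shows that $a_1b_1$ and $a_2b_2$ are roots of one and the same monic quadratic $X^2-(1-d_1)X+(4d_2-d_1^2-d_1)$, so over the field $\k$ the value $a_2b_2$ is one of its two roots. The sum of the roots is $1-d_1=1-(a_1+b_1)+2a_1b_1$, whence the root conjugate to $a_1b_1$ is $(1-a_1)(1-b_1)$, and the matching value of $e_1=d_1+2e_2$ is $2-(a_1+b_1)=(1-a_1)+(1-b_1)$. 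Thus $(a_2+b_2,\,a_2b_2)$ equals either $(a_1+b_1,\,a_1b_1)$ or $((1-a_1)+(1-b_1),\,(1-a_1)(1-b_1))$; since an unordered pair is determined by its sum and product, this is precisely \eqref{9d2012}.

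For the ``if'' direction the case $\{a_2,b_2\}=\{a_1,b_1\}$ is just the symmetry of $D$, while $\{a_2,b_2\}=\{1-a_1,1-b_1\}$ follows from the invariance of $d_1$ and $d_2$ under $(a,b)\mapsto(1-a,1-b)$, which is visible directly in the two formulas above and which conceptually reflects the identity $D(a,b|z)=D(1-a,1-b|z)$ coming from the Euler identity $F(a,b|z)=(1-z)^{1-a-b}F(1-a,1-b|z)$ already used in Corollary \ref{25nov}.

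The main obstacle is the explicit computation of $d_2$: one must expand $G/F$ to second order and check that the apparent poles at $a,b,a+1,b+1=0$ all cancel, producing the clean polynomial displayed above; after that everything reduces to formal manipulation of a single quadratic. A minor point to keep in view is the role of $p\neq 2$, needed both to invert $4$ in the formula for $d_2$ and to keep $d_1=e_1-2e_2$ sensitive to $e_2$ (in characteristic $2$ the statement genuinely breaks down).
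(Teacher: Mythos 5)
Your proof is correct and takes essentially the same route as the paper: both arguments reduce everything to the symmetric functions $\sigma=a+b$, $\tau=ab$, compute the identical formulas $d_1=\sigma-2\tau$ and $4d_2=\sigma^2-5\sigma\tau+5\tau^2+\sigma-\tau$, and obtain the dichotomy from the same algebra, since your monic quadratic $X^2-(1-d_1)X+(4d_2-d_1^2-d_1)=\bigl(X-\tau_1\bigr)\bigl(X-(1-a_1)(1-b_1)\bigr)$ evaluated at $X=\tau_2$ is exactly the paper's factored difference $(\tau_2-\tau_1)(\sigma_1-\tau_1+\tau_2-1)\equiv 0$. The only differences are cosmetic: you package the final step via Vieta's formulas instead of factoring the difference of the $C_2$'s directly, and you write out the easy ``if'' direction, which the paper leaves implicit.
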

Note that in general $G(a_1,b_1|z), \ a_1,b_1\in\k$ (consequently $D(a_1,b_1|z)$), is not well-defined because in its expression we have division by primes.
However, it makes sense to talk about the coefficients of $z$ and $z^2$ in characteristic $p\not=2$. 

\begin{proof}
 Let $\sigma=a+b,\ \tau=ab$. The coefficients of $D(a,b|z)$ can be written in terms of the symmetric polynomials $\sigma,\tau$.
Let $C_{k}(\sigma,\tau)$ be the $k$-th coefficient of $D(a,b|z)$. By definition we have $C_1(\sigma,\tau)=\sigma-2\tau$, so the assumption implies that
\begin{equation}
 \label{29nov1}
\sigma_1-2\tau_1\stackrel{p}{\equiv} \sigma_2-2\tau_2.
\end{equation}
Now for $C_2(\sigma,\tau)$ we have
\begin{align*}
4C_2(\sigma_1,\tau_1)&-4C_2(\sigma_2,\tau_2)=\sigma_1^2-5\sigma_1\tau_1+5\tau_1^2-\sigma_2^2+5\sigma_2\tau_2-5\tau_2^2+\sigma_1-\tau_1-\sigma_2+\tau_2\\
&= (\sigma_1-2\tau_1)^2-(\sigma_2-2\tau_2)^2+\tau_2(\sigma_2- 2\tau_2)-\tau_1(\sigma_1- 2\tau_1)+\tau_2^2-\tau_1^2+\tau_1-\tau_2\\
&\equiv (\tau_2-\tau_1)(\sigma_1-2\tau_1+\tau_1+\tau_2-1)\pmod{ p}
\end{align*}
In the above we have used the congruence \eqref{29nov1} in the last line. Hence from the last congruence we conclude that 
either $\tau_1\stackrel{p}{\equiv}\tau_2$
or $\sigma_1\stackrel{p}{\equiv} \tau_1-\tau_2+1$. The first (second) case together with the equation \eqref{29nov1} gives the first 
(second) possibility mentioned in \eqref{9d2012}.
\end{proof}

\section{Proofs}
\label{proof}
In this section we give a proof of Theorem \ref{29oct} and its corollaries announced in the Introduction. 
 %%%%%%%%%%%%%%%%%%%%%%%%%%%%%%%%%%%%%%%%%%%%%%%%%%%%%%%%%%%%%%%%%%%%%%%%%%%%%%%5
\subsection{Proof of Theorem \ref{29oct}}
First we show that for $p>2m_1m_2$, the $p$-integrality of $q(a,b|z)$
is equivalent to  condition \eqref{12oct2013}. In fact
if $q(a,b|z)$ is $p$-integral, then equation \eqref{4dec2012} holds. In particular we can apply Lemma \ref{31may2013} for 
$\{a_1,b_1\}=\{a,b\}$, $\{a_2,b_2\}=\{\dwork{p}(a),\dwork{p}(b)\}$ and the finite field $\k=\frac{\Z}{p\Z}$. It follows that 
$\{\dwork{p}(a),\dwork{p}(b)\}$ congruent to $\{a,b\}$ or $\{1-a,1-b\}$ modulo $\k$. But $p>2m_1m_2$, in particular it is greater than
the denominators of $a,b<1$. Since the action of $\dwork{p}$ does not
change the denominator, so the  above congruence is indeed an equality
in $\Z$.
Hence the only thing to complete the proof is to show that the conditions 
of Theorem \ref{29oct} are equivalent to equation \eqref{12oct2013}. In odrer to do this we analyze the equality \eqref{12oct2013} case by case.\\
Recall that
$$
a=\frac{a_1}{a_2}=\frac{m_1m_2-m_1+m_2}{2m_1m_2},\  b=\frac{b_1}{b_2}=\frac{m_1m_2-m_1-m_2}{2m_1m_2}.
$$
1. $\dwork{p}(a)=a,\ \dwork{p}(b)=b$ or  $\dwork{p}(a)=1-a,\ \dwork{p}(b)=1-b$.
By definition of the Dwork map in \eqref{121212}, in this case we have
$p^{-1}a_1\stackrel{a_2}{\equiv} \epsilon a_1$ and 
$p^{-1}b_1\stackrel{b_2}{\equiv}\epsilon b_1$, where $\epsilon=1$ corresponds to the first case and $\epsilon=-1$ corresponds to the second case.
Since $p$ is odd, the above congruences are equivalent to 
\begin{align}
p(m_1+m_2)&\equiv\epsilon(m_1+m_2)\pmod{2m_1m_2}\notag\\
\label{15oct}
p(m_1-m_2)&\equiv\epsilon(m_1-m_2)\pmod{2m_1m_2}.
\end{align}
Once adding and subtracting of congruences in \eqref{15oct} we find that $p\stackrel{m_i}{\equiv}\epsilon$ for both $i=1,2$.
From this fact one can easily check that \eqref{15oct} is equivalent to  
$$
\left( p\stackrel{2m_1}{\equiv}\epsilon,\, p\stackrel{2m_2}{\equiv}\epsilon \right )  \hbox{ or }  
\left ( p\stackrel{2m_1}{\equiv} m_1+\epsilon,\, p\stackrel{2m_2}{\equiv} m_2+\epsilon  
\right ). 
$$
2. The case $\dwork{p}(a)=b$ and $\dwork{p}(b)=a$ or $\dwork{p}(a)=1-b,\ \dwork{p}(b)=1-a$. 
Again by definition of the Dwork map we have 
$p^{-1}a_1\stackrel{a_2}{\equiv} \epsilon b_1$ and 
$p^{-1}b_1\stackrel{b_2}{\equiv}\epsilon a_1$, , where $\epsilon=1$ corresponds to the first case and $\epsilon=-1$ corresponds to the second case.
Like the previous case these congruences are equivalent to
\begin{align}
p(m_1+m_2)&\equiv\epsilon(m_1-m_2)\pmod{2m_1m_2}\notag\\
\label{15oct3}
p(m_1-m_2)&\equiv\epsilon(m_1+m_2)\pmod{2m_1m_2},
\end{align}
and one can check that this is equivalent to
$$
\left( p\stackrel{2m_1}{\equiv}\epsilon,\, p\stackrel{2m_2}{\equiv}-\epsilon \right )  \hbox{ or }  
\left ( p\stackrel{2m_1}{\equiv} m_1+\epsilon,\, p\stackrel{2m_2}{\equiv} m_2-\epsilon  
\right ). 
$$
Now for $(m,\infty,\infty)$, from \eqref{tarifab} we see that $a=b=\frac{m-1}{2m}$. Then  Condition \eqref{12oct2013} is equivalent to
$p(m-1)\stackrel{2m}{\equiv}\epsilon(m-1)$, canceling $m-1$ and the fact that $p$ is odd proves the statement.

%%%%%%%%%%%%%%%%%%%%%%%%%%%%%%%%%%%%%%%%%%%5
\subsection{Proof of Corollary \ref{arithmatic}}
We see that if one of $m_i$, $i=1,2$,  does not belong to the set $\{2,3,4,6,\infty\}$, 
then there is a residue like $r\neq \epsilon, m_i+\epsilon$ with $(r,2m_i)=1$. Then by Dirichlet theorem there are infinitely many primes $p\stackrel{2m_i}{\equiv} r$
and by Theorem \ref{29oct}, $J$ is not $p$-integral for such primes, which is a contradiction. 
Now checking all possibilities we find the list given
in the statement of the corollary.
\subsection{Proof of Corollary \ref{hecke}}
We note that the first condition in Theorem \ref{29oct}, namely $p$ modulo $2m_1$ automatically holds for $m_1=2$
and every prime greater than $3$. Hence $J$ for Hecke group $\Gamma_{(2,n,\infty)}$ is $p$ integral if and only if
$p\stackrel{2n}{\equiv}\pm 1$ or $n\pm 1$. This is equivalent to $p\stackrel{n}{\equiv}\pm 1$.

\subsection{Proof of Corollary \ref{basis}}
Let $m_2$ be  finite (the case $m_2=\infty$ resolves in a similar way). Let also $\mathfrak m$ be 
the algebra of automorphic forms for $\Gamma_{\mathfrak t}$. Using Theorem \ref{maintheo}, it is enough to 
prove that $J$ is $p$-integral if and only  if $\mathfrak m$ is $p$-integral.
Let $E_4=E_{4}^{(2)}$ and 
$E_6=E_{6}^{(2)}$, where $E_k^{(2)}$ are defined in \eqref{ek2}. We have
$$
J=\frac{E_{4}^3}{E_{4}^3-E_{6}^2}.
$$
If $J$ is not $p$-integral then one of the functions $E_4$ or $E_6$ is not $p$-integral and hence $\mathfrak m$ is not $p$-integral ($E_4$ and $E_6$ are members of this algebra and 
we use the convention that the $p$-integrality property is defined up to multiplication by a constant).
Now from the Halphen system one can check that 
$$
t_1-t_2=\frac{\dot J}{J},\quad t_3-t_2=\frac{\dot J}{J-1},
$$
and so
$$
E_{2k}^{(1)}=\frac{J-1}{J}(\frac{\dot J}{J-1})^k,\ \ 
E_{2k}^{(2)}=(\frac{\dot J}{J})^k\frac{J}{J-1}.
$$
If $J$ is $p$-integral then  all its derivatives are
$p$-integral, an so, all the above elements are $p$-integral. A subset of these functions form a basis for $\mathfrak m$.
Note that if an algebra $\mathfrak m$ is $p$-integral and we have a basis $A$ of $\mathfrak m$ then after multiplication of 
the elements of $A$ by proper constants, $A$ turns to be a basis of $\mathfrak m$ with $p$-integral elements.   

%%%%%%%%%%%%%%%%%%%%%%%%%%%%%%%%%%%%5
\subsection{Final remarks}
We expect that Theorem \ref{maintheo} to be true for primes $p$ less than and  coprime to $2m_1m_2$. 
This is equivalent to say that 
Corollary \ref{25nov} is "if and only if".  If $q(a,b|z)$ is $p$-integral, then 
$q(\dwork{p}^n(a),\dwork{p}^n(b)|z)$ is $p$-integral  for all $n\in \N$ and hence 
we can use Corollary \ref{nopint} and Lemma \ref{31may2013} and conclude that 
\begin{equation}
 \{\dwork{p}^n(a),\dwork{p}^n(b)\}\stackrel{p} \equiv\{a,b\} \hbox{  or   }  \{1-a,1-b\}.
\end{equation}
This does not seem to be sufficient in order to conclude the true equality. In order to further investigate the 
$p$-integrality of $q(a,b|z)$ we need 
to use more data  from the congruency (\ref{4dec2012}).

Unfortunately, in the literature there are no applications  for the $q$-expansion of automorphic forms for non-arithmetic triangle groups.
The main reason is the lack of Hecke theory for such automorphic forms. The rationality of coeffients is
one of the main obstacles. We saw that for a class of primes the integrality in the level of $p$-adic integers holds.
One question here is whether this integrality has distinguished enumerative properties. 

The hypergeometric functions $F,F\log(z)+G$ up to some $\Gamma$-factors are periods of the following family of curves
$$
C^{a,b,c}_{z}: y=x^a(x-1)^b(x-z)^c,
$$
where $a,b,c$ are give in \eqref{abc}. Another interesting problem is to find a geometric description for the result
of Theorem \ref{maintheo} using the above family of curves. 
This curve and its Jacobian, are extensively studied by Wolfart et al. 
in connection with the algebraic values of the  Schwarz map, see \cite{wolf2, wo83} and references therein.

Another interesting problem which we would like to address here is the $p$-integrality  
for generalized hypergeometric equations of order $n$ whose 
local exponents at $z=0$ are all zero. As we mentioned before, Dwork's theorem is valid in this general case. 
The Gauss hypergeometric equation corresponds to $n=2$. We  obtain in a similar way $p$-integrality results for the mirror map 
(the analog of $q(a,b|z)$ for arbitrary $n$).
For $n>2$ in the  absence of the Euler identity, the only sufficient condition for $p$-integrality of the mirror map
is that $\dwork{p}$ acts as a permutation on the local exponents of the differential equation at $\infty$ (an analog of Corollary \ref{25nov}).
Then an interesting question is the converse, as we did in this article for $n=2$. 
An important situation with applications in algebraic geometry and mathematical physics is the case in which 
the mirror map is almost integral. 
Then a simple observation shows that
in Lemma \ref{nopint}, the congruence \eqref{4dec2012} is indeed an equality. In \cite{Roq}, the author, using differential Galois theory,
showed that, this equality holds if and only if $\dwork{p}$ acts as a permutation for almost all $p$ 
%(it means that essentially there is no non-trivial identity for the quotient of two different hypergeometric equation with rational parameters).
This fact establishes the problem of classification of
all hypergeometric equations with maximal unipotent monodromy and  with integral mirror map.
As a corollary for $n=4$, which is important in mirror symmetry, the well-known $14$ cases is obtained. 

As a final remark we would like to mention the possible relationship between  almost integrality and 
arithmeticity of the mondromy groups. The coincidence of the Takeuchi list and the list of Corollary \ref{arithmatic} 
does not seem to be 
casual.
However the intrinsic connection between these two differnet worlds is not yet clear. Despite the existence of Dwork's method  
for non unipotent cases, this method does not determine the rest of the Takeuchi list, i.e., arithmetic tiangle groups without cusp.
The situation for $n>2$ is more obscure. For example for $n=4$ even in the case with maximal unipotent monodromy
it has been shown that among the $14$ cases some of them are arithmetic and some of them are not (see for instance \cite{sive}). 
For a nice discussion in this subject we refer the reader to \cite{sar}.

{\tiny

\def\cprime{$'$} \def\cprime{$'$} \def\cprime{$'$}

}

%\bibliography{biblio}

%\bibliographystyle{plain}

\end{document}